\newtheorem{theorem}{Theorem}[section]
\newtheorem{lemma}[theorem]{Lemma}
\newtheorem{proposition}[theorem]{Proposition}
\theoremstyle{definition}
\newtheorem{assumption}{Assumption}[section]
\newtheorem{definition}[theorem]{Definition}
\newtheorem{example}[theorem]{Example}
\newcommand{\exclude}[1]{}
\newcommand{\proj}{p}
\newcommand{\1}{{\mathbf{1}}}
\newcommand{\N}{{\mathbb{N}}}
\renewcommand{\P}{{\mathbb{P}}}
\newcommand{\R}{{\mathbb{R}}}
\newcommand{\F}{\mathbb{F}}
\definecolor{darkgreen}{rgb}{0,0.5,0}
\definecolor{lightgreen}{rgb}{0.5,0.9,0.5}
\definecolor{magenta}{rgb}{0.75,0,0.25}
\definecolor{violet}{rgb}{0.25,0,0.75}
\newcommand{\hypsurf}{\Theta}
\newcommand{\tr}{\operatorname{tr}}
\renewcommand{\P}{{\mathbb P}}
\newcommand{\cF}{{\cal F}}
\newcommand{\cU}{{\cal U}}
\newcommand{\be}{\begin{equation}}
\newcommand{\ee}{\end{equation}}
\newcommand{\bea}{\begin{eqnarray}}
\newcommand{\eea}{\end{eqnarray}}
\newcommand{\beast}{\begin{eqnarray*}}
\newcommand{\eeast}{\end{eqnarray*}}
\newcommand{\bproof}{\begin{proof}}
\newcommand{\eproof}{\end{proof}}
\title{
Existence and uniqueness of solutions of SDEs with discontinuous drift and finite activity jumps
}
\author{Pawe{\l} Przyby{\l}owicz \and Michaela Sz\"olgyenyi \and Fanhui Xu}
\date{Preprint, January 2021}
\begin{document}

\maketitle


\begin{abstract}
In this letter we prove existence and uniqueness of strong solutions to multi-dimensional SDEs with discontinuous drift and finite activity jumps.\\

\noindent Keywords: multi-dimensional jump-diffusion stochastic differential equations, finite activity jumps, discontinuous drift, existence and uniqueness, strong solutions\\
Mathematics Subject Classification (2020): 60H10
\end{abstract}

\section{Introduction}\label{sec:intro}

On a filtered probability space $(\Omega,\cF,\F=(\F_t)_{t\in[0,T]},\P)$ that satisfies the usual conditions we consider a time-homogeneous stochastic differential equation (SDE) with finite activity jumps,
\begin{align}\label{eq:SDE}
dX_t &= \mu(X_{t-})  dt + \sigma (X_{t-}) dW_t+\int_{\R} \rho(X_{t-},y) \nu(dy,dt), \quad t\in[0,T], \quad X_0=x_0\in\R^d,
 \end{align}
where  $\mu\colon\R^d\to \R^d$, $\sigma\colon\R^d\to \R^{d\times d}$, and $\rho\colon\R^d \times \R \to \R^d$ are measurable functions, $T\in(0,\infty)$, and $W=(W_t)_{t\in[0,T]}$ is a
$d$-dimensional standard Brownian motion. Furthermore, $\nu$ is a Poisson random measure with finite jump intensity, associated with a scalar compound Poisson process. We assume that both $W$ and $\nu$ are adapted to $\F$ and independent of each other.

In case the coefficient $\mu$ is Lipschitz, it is known that SDE \eqref{eq:SDE} admits a unique strong solution, cf.~\cite[p.~79, Theorem 117]{situ2005} or \cite[Theorem 1.9.3]{PBL}.
However, there are applications where jump SDEs with discontinuous drift appear. For example control actions on energy markets often lead to discontinuities in the drift of the controlled process, cf., e.g., \cite{sz2016a,shardin2017}. So to be able to study control problems on energy markets in the most common models for energy prices, i.e.~jump SDE models, existence and uniqueness needs to be settled.

\emph{In this paper we provide sufficient conditions for the existence and uniqueness of strong solutions to \eqref{eq:SDE}, where we allow $\mu$ to be discontinuous.}

In the jump-free case, existence and uniqueness results for strong solutions to SDEs with discontinuous drift have been proven in the classical papers \cite{Zvonkin1974,Veretennikov1981,Veretennikov1982,Veretennikov1984} and in the more recent papers \cite{KR,sz14,sz2016a,sz15,XZ,sz2017a}.
In the case of presence of jumps in the driving process, existence and uniqueness of global strong solutions is proven in \cite[Theorem 3.1]{PS20} for the scalar case with Poisson jumps.
In \cite{Z2013} they prove existence and uniqueness of a maximal local solution for SDEs driven by a standard symmetric $\alpha$-stable process under certain integrability and boundedness conditions. In \cite{XZ1} existence and uniqueness of global strong solutions is obtained under Sobolev regularity assumptions.

In this letter we prove an existence and uniqueness result for multidimensional SDEs with discontinuous drift and general finite activity jumps.
For the proof we apply a tailor-made transformation method. As a benefit, integrability assumptions on the coefficients are not required.

\section{Assumptions}\label{sec:ass}

In the following we denote by $\|\cdot\|$ the respective Euclidean or matrix norm.

\begin{assumption}\label{ass:jump}
The jump measure $\nu$ is a Poisson random measure with finite jump intensity, associated with a compound Poisson process $L=(L_t)_{t\in[0,T]}$, that is $L_t=\sum_{k=1}^{N_t}\xi_k$, where $N=(N_t)_{t\in[0,T]}$ is a Poisson process and $(\xi_k)_{k\in\N}$ is a family of iid random variables independent of $N$ with associated distribution  $\psi$ that has finite second moments and $\mathbb{P}(\xi_k=0)=0$ for all $k\in\N$. 
\end{assumption}
Since the process $L$ has only a finite number of jumps in any interval $(0,t]$, we get for all $t\in[0,T]$,
\begin{equation}
	\int_0^t\int_{\R} \rho(X_{s-},y) \nu(dy,ds)=\sum_{0<s\leq t}\rho(X_{s-},\Delta L_s)\mathbf{1}_D(s)=\sum_{k=1}^{N_t}\rho(X_{\tau_{k}-},\xi_k),
\end{equation}
where $D=\{(\omega,t) \colon \Delta L_t(\omega)=L_{t}(\omega)-L_{t-}(\omega)\neq 0\}$ and $\tau_k$ is the time of the $k$-th jump of $N$, see, for example, \cite[Theorem 2.3.2, p.~22--23]{lukaszdelong}.

For later use, we denote the continuous part of $X$ for all $t\in[0,T]$ by
$$X_t^c=X_t-\int_0^t\int_{\R} \rho(X_{s-},y) \nu(dy,ds).$$

In order to define assumptions on the drift coefficient, we recall the following definitions from differential geometry.

\begin{definition}[\text{\cite[Definitions 3.1 and 3.2]{sz2017a}}]
Let $A\subseteq \R^d$.
\begin{enumerate}
\item For a continuous curve $\gamma\colon [0,1]\to \R^d$, let  $\ell(\gamma)$ denote its length, i.e.
\[
\ell(\gamma)=\sup_{n \in \mathbb{N}, \, 0\le t_1<\ldots<t_n\le 1}\sum_{k=1}^n \|\gamma(t_k)-\gamma(t_{k-1})\|.
\] 
The {\em intrinsic metric} $\varrho$ on $A$ 
is given by 
\[
\varrho(x,y):=\inf\{\ell(\gamma)\colon \gamma\colon[0,1]\to A \text{ is a continuous curve satisfying } \gamma(0)=x,\, \gamma(1)=y\},
\]
where $\varrho(x,y):=\infty$, if there is no continuous curve from $x$
to $y$.  
\item Let $f\colon A\to \R^m$ be a function.
We say that $f$ is {\em intrinsic Lipschitz}, if it is Lipschitz w.r.t. the
intrinsic metric on $A$, i.e.~if there exists a constant $L\in(0,\infty)$ such that for all $x,y\in A$,
\[
 \|f(x)-f(y)\|\le L \varrho(x,y).
\]
\end{enumerate}
\end{definition}

\begin{definition}[\text{\cite[Definition 3.4]{sz2017a}}]\label{def:pw-lip}
A function $f\colon \R^d\to\R^m$ is {\em piecewise Lipschitz}, if
there exists a hypersurface $\hypsurf\subseteq \R^d$ with finitely many connected 
components and with the property, that
the restriction $f|_{\R^d\backslash \hypsurf}$ is intrinsic Lipschitz.
We call $\hypsurf$ an {\em exceptional set} of $f$,
and we call
\[
\sup_{x,y\in \R^d\backslash\hypsurf}\frac{\|f(x)-f(y)\|}{\varrho(x,y)}
\]
the \emph{piecewise Lipschitz constant} of $f$.
\end{definition}

We will assume that the drift $\mu$ is piecewise Lipschitz with exceptional set $\hypsurf$, where $\hypsurf$ is a fixed, sufficiently regular hypersurface, see Assumption \ref{ass:ex-un} below.

If $\hypsurf\in C^4$, locally there exists a unit normal vector, that is a continuously differentiable $C^3$
function $n\colon D\subseteq\Theta\to \R^d$ (cf.~\cite[Theorem 4.1]{dudekholly}) such that for every $\zeta\in D$, $\|n(\zeta)\|=1$, and $n(\zeta)$ is orthogonal to the tangent space of $\hypsurf$ in $\zeta$. Recall the following definition.

\begin{definition}\label{def:positivereach}
Let $\hypsurf \subseteq \R^d$. 
\begin{enumerate}
\item \label{def:ucpp} 
For $\varepsilon\in(0,\infty)$ an environment $\hypsurf^\varepsilon$ of $\hypsurf$ is said to have the 
{\em unique closest point property}, if for every $x\in \R^d$
with $d(x,\hypsurf)<\varepsilon$ there is a unique $\proj\in \hypsurf$ with
$d(x,\hypsurf)=\|x-\proj\|$.
Therefore, we can define a mapping 
$\proj\colon\hypsurf^{\varepsilon}\to \hypsurf$ assigning to each $x$
the point $\proj(x)$ on $\hypsurf$, which is closest to $x$.
\item 
A set $\hypsurf$ is said to be of {\em positive reach}, if there exists  
$\varepsilon\in(0,\infty)$ such that  $\hypsurf^\varepsilon$ has the  
unique closest point property.
The {\em reach} $r_{\hypsurf}$ of $\hypsurf$ is the supremum 
over all such $\varepsilon$ if such an $\varepsilon$ exists, and 
0 otherwise. 
\end{enumerate}

\end{definition}

\begin{assumption}\label{ass:ex-un}We make the following assumptions on the functions $\mu,\sigma,\rho$:
\begin{description}
\item[(ass.~$\mu$)] The drift coefficient $\mu\colon\R^d \to \R^d$ is piecewise Lipschitz;
its exceptional set $\hypsurf$ is a $C^4$-hypersurface with reach $r_{\hypsurf}> \varepsilon_0$ for some $\varepsilon_0\in(0,\infty)$ and every unit normal vector $n$ of $\hypsurf$ has bounded second and third derivatives.
\item[(ass.~$\sigma$)] The diffusion coefficient $\sigma\colon\R^d\to\R^{d\times d}$ is Lipschitz.\item[(loc.~bound)] The coefficients $\mu$ and $\sigma$ satisfy
$$  \sup_{x \in \hypsurf^{\varepsilon_0}} (\|\mu(x)\| + \|\sigma(x)\|) < \infty;     $$
\item[(non-parallelity)]  There exists a constant $c_0\in(0,\infty)$ such that $\|\sigma(\xi)^\top n(\xi)\|\ge c_0$ for
all $\xi\in \hypsurf$;
\item[(techn.~ass.)] The function
\begin{align}\label{eq:alphad}
\alpha\colon \hypsurf \rightarrow \R^d, \qquad \alpha(x)=\lim_{h\to 0+}\frac{\mu(x-h n(x))-\mu(x+hn(x))}{2 \|\sigma(x)^\top n(x)\|^2}
\end{align} is well defined, bounded, and
belongs to $C^3_b(\hypsurf;\mathbb{R}^d)$.
\item[(ass.~$\rho$)] There exists $c_{\rho}\in (0,\infty)$ such that the jump coefficient $\rho\colon\R^d \times \R \to\R^d$ satisfies for all $x\in\mathbb{R}^d$,
\begin{equation*}
	 \int_\R \|\rho(x,y)\|^2 \psi(dy) \le c_{\rho}(1+\|x\|^2)
\end{equation*}
and for all $x,z\in\R^d$,
\begin{equation*}
	\int_\R \|\rho(x,y)-\rho(z,y)\|^2 \psi(dy) \le c_{\rho}\|x-z\|^2,
\end{equation*}
where as before $\psi$ is the distribution of $\xi_1$.
\end{description}
\end{assumption}

\section{Preliminary results}

As a preliminary for the proof of our main result on existence and uniqueness of solutions of SDE \eqref{eq:SDE}, we need to adopt the transformation introduced in \cite{sz2017a} and recall its properties. Furthermore, we require an It\^o-type formula for this transform and its inverse.

\subsection*{The transform}
In the proof of our main result, we will apply a transform $G\colon \R^d\to \R^d$ that has the property that
the process formally defined by $Z=G(X)$ satisfies an SDE with 
Lipschitz coefficients and therefore has a solution by classical results, see \cite[Theorem 1.9.3]{PBL}.

The function $G$ is chosen so that it impacts the coefficients of the SDE \eqref{eq:SDE} only locally around the points of discontinuity of the drift.
This behaviour is ensured by incorporating a bump function into $G$. Adopted from \cite{sz2017a}, $G$ is given by
\begin{align}\label{eq:G}
G(x)=\begin{cases}
 x+\varphi(x) \alpha(\proj(x)),&  x\in \hypsurf^{\varepsilon_0},\\
x, & x\in \R^d\backslash \hypsurf^{\varepsilon_0},
\end{cases}
\end{align}
with $r_{\hypsurf}>\varepsilon_0>0$, see 
Assumption \ref{ass:ex-un},
$\alpha$ as in Assumption \ref{ass:ex-un}, and 
\begin{align}  \label{eq:varphi}
 \varphi(x)=n(\proj(x))^\top(x-\proj(x))
\|x-\proj(x)\|\phi\left(\frac{\|x-\proj(x)\|}{c}\right),                                                                                                                                       
 \end{align}
with a constant $c\in(0,\infty)$ and $\phi\colon \R \to \R$,
\begin{align*}
\phi(u)=
\begin{cases}
(1+u)^4 (1-u)^4, & |u|\le 1,\\
0, & |u|> 1.
\end{cases}
\end{align*}

We recall the following properties of $G$. Thereby, we denote $G':=\nabla G$ and by $G''$ the Hessian of $G$.
\begin{lemma}[\text{\cite[Lemma 2.6]{NSS19}}]
\label{lemG}
Let Assumption \ref{ass:ex-un} hold. Then we have
\begin{enumerate}[(i)]\setlength{\itemsep}{0em}
 \item\label{it:C1} $G\in C^1(\mathbb{R}^d,\mathbb{R}^d)$;
 \item\label{it:GpwLip} $G'$ is Lipschitz, $G''$ exists on $\mathbb{R}^d \setminus \hypsurf$ and is piecewise Lipschitz with exceptional set $\hypsurf$;
 \item\label{it:boundedder} $G'$ and $G''$ are bounded;
 \item\label{it:Ginv} for $c$ sufficiently small, see \cite[Lemma 1]{LS19}, $G$ is globally invertible;
 \item\label{it:GLip} $G$ and $G^{-1}$ are Lipschitz continuous.
\end{enumerate}
\end{lemma}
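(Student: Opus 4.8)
Since this is \cite[Lemma 2.6]{NSS19}, the plan is to recall the structure of $G$ and explain where each property comes from, reducing everything to three ingredients. Assume $0<c<\varepsilon_0$, so that $G-\id$ is supported in the tube $\{x\colon\|x-\proj(x)\|\le c\}\subseteq\hypsurf^{\varepsilon_0}$. Writing $\sdist(x)=n(\proj(x))^\top(x-\proj(x))$ for the signed distance to $\hypsurf$, one has $\|x-\proj(x)\|=|\sdist(x)|$ on $\hypsurf^{\varepsilon_0}$, and since $\phi$ is even, $\varphi(x)=g(\sdist(x))$ with $g(s):=s\,|s|\,\phi(s/c)$. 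Hence $G=\id+g(\sdist)\cdot(\alpha\circ\proj)$, and the whole statement follows from the regularity and boundedness of $\sdist$, of $\alpha\circ\proj$, and of the scalar profile $g$, combined through the chain and product rules.

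The first real step is the differential geometry of the tube. Using that $\hypsurf$ is a $C^4$-hypersurface of reach $>\varepsilon_0$, I would invoke the standard facts that the nearest-point projection $\proj$ is well defined and $C^3$ on $\hypsurf^{\varepsilon_0}$ and that the signed distance $\sdist$ is $C^4$ there with $\nabla\sdist=n\circ\proj$; together with the assumed boundedness of the second and third derivatives of $n$ (i.e.\ uniformly bounded curvature) and with $\alpha\in C^3_b(\hypsurf;\R^d)$, this yields that $\sdist$, $\proj$, and $\alpha\circ\proj$ have derivatives up to order $3$ that are bounded uniformly on the $c$-tube. The second step is the elementary analysis of $g$: on $\{s>0\}$ and $\{s<0\}$ we have $g(s)=\pm s^2\phi(s/c)$, so $g\in C^3(\R\setminus\{0\})$ with $g,g',g''$ supported in $[-c,c]$; since $g(0)=g'(0)=0$, $g\in C^1(\R)$ with $g'$ Lipschitz; but $g''(0\pm)=\pm2\phi(0)$, so $g''$ exists only on $\R\setminus\{0\}$, is bounded, and is Lipschitz on each half-line, i.e.\ piecewise Lipschitz with exceptional point $\{0\}$; finally $g,g',g''$ vanish near $\pm c$ because $\phi$ vanishes to order $4$ at $\pm1$.

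The third step is the assembly. Applying the chain and product rules to $g(\sdist)\cdot(\alpha\circ\proj)$ transfers these properties to $F:=G-\id$; the vanishing of $g,g',g''$ near $\|x-\proj(x)\|=c$ lets $F$ and its derivatives be extended by $0$ across that interface, so on all of $\R^d$ one gets $F\in C^1$, $\nabla F$ bounded and Lipschitz, and $\nabla^2 F$ existing on $\R^d\setminus\hypsurf$, bounded, and piecewise Lipschitz with exceptional set $\hypsurf$ --- the only obstruction being the jump of $g''$ across $\{\sdist=0\}=\hypsurf$. Adding $\id$ gives \ref{it:C1}--\ref{it:boundedder}. For \ref{it:Ginv} I would estimate $\|\nabla F\|_\infty\le K\,c$ from $|g|\le c^2\|\phi\|_\infty$, $|g'|\le K c$ on $[-c,c]$, and the order-$3$ bounds on $\alpha\circ\proj$; then for $c$ small enough $\|\nabla F\|_\infty<1$, so $G=\id+F$ is a proper local $C^1$-diffeomorphism, hence a global diffeomorphism (alternatively one simply invokes \cite[Lemma 1]{LS19}). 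Then \ref{it:GLip} follows from the two-sided estimate $(1-Kc)\|x-y\|\le\|G(x)-G(y)\|\le(1+Kc)\|x-y\|$, the left inequality also bounding the Lipschitz constant of $G^{-1}$ by $(1-Kc)^{-1}$.

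The main obstacle is the differential-geometric input of the first step --- extracting $C^3$/$C^4$ regularity and \emph{uniform} bounds for $\proj$ and $\sdist$ on the tube from the $C^4$, positive-reach, and bounded-normal-derivative hypotheses --- together with the careful bookkeeping, in the assembly step, of the single degree of regularity lost through the factor $s\mapsto s|s|$; this loss is precisely what prevents $G$ from being $C^2$ and localises the failure of twice-differentiability to $\hypsurf$. The gluing across the outer interface $\|x-\proj(x)\|=c$, by contrast, is routine thanks to the order-$4$ vanishing of $\phi$ at $\pm1$.
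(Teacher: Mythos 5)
The paper does not prove this lemma at all: it imports it verbatim from \cite[Lemma 2.6]{NSS19}, with the invertibility claim delegated to \cite[Lemma 1]{LS19}. Your reconstruction --- writing $G-\id$ as $g(\sdist)\cdot(\alpha\circ\proj)$ with the scalar profile $g(s)=s|s|\phi(s/c)$, pulling the tube regularity of $\sdist$ and $\proj$ from the $C^4$/positive-reach hypotheses, and tracking the single derivative lost at $s=0$ (which is exactly what confines the failure of $G''$ to $\hypsurf$) --- is precisely the argument of those references, and I see no gaps in it.
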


\subsection*{A change of variable formula}

We require a multidimensional It\^o-type formula, which is in a very specific sense slightly more general than \cite[Theorem 3.1]{peskir2007}, and which is, up to our best knowledge, novel. Note that if the It\^o-type formula holds for the scalar components of $G=(G_1,\dots,G_d)$ and $G^{-1}=(G^{-1}_1,\dots,G^{-1}_d)$, then it also holds for $G,G^{-1}$.

\begin{proposition}\label{propito}
Let $G$ be as in \eqref{eq:G} and let Assumption \ref{ass:ex-un} be satisfied.
Let $Y=(Y^1,\dots,Y^d)$ be a semimartingale with $Y_0=y_0$ and with finite variation jumps that satisfies for all $s\in(0,T]$,
$\P(Y_s\in\hypsurf)=0$.
Then for all $f\in\{G_1,\dots,G_d,G^{-1}_1,\dots,G^{-1}_d\}$ and all (stopping times) $t\in[0,T]$ it holds that
\begin{equation}\label{ito}
\begin{aligned}
 f(Y_t)=f(y_0)&+\sum_{k=1}^d \int_0^t \frac{\partial f}{\partial y_k} (Y_{s-}) d(Y^c_s)^k + \frac{1}{2} \sum_{k,j=1}^d \int_0^t \frac{\partial^2 f}{\partial y_k\partial y_j} (Y_{s-}) d[(Y^c_s)^k,(Y^c_s)^j] 
 \\&+
  \sum_{0<s\le t} \big( f(Y_s)-f(Y_{s-}) \big).
 \end{aligned}
\end{equation}
\end{proposition}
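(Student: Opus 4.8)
The plan is to deduce \eqref{ito} from the classical It\^o formula for $C^2$ functions by mollifying $f$ and passing to the limit. By the remark preceding the statement it suffices to prove \eqref{ito} for a single scalar $f\in\{G_1,\dots,G_d,G^{-1}_1,\dots,G^{-1}_d\}$. For such $f$, Lemma~\ref{lemG} gives $f\in C^1(\R^d,\R)$ with $L$-Lipschitz gradient, and $f$ is $C^2$ on the open set $\R^d\setminus\hypsurf$ with $\nabla^2 f$ bounded there; the statements for $G^{-1}$ follow from those for $G$ because $G(\hypsurf)=\hypsurf$ (as $\varphi\equiv0$ on $\hypsurf$) and $G$ is a $C^2$ diffeomorphism off $\hypsurf$, so $(G^{-1})'$ is Lipschitz and $(G^{-1})''$ is bounded on $\R^d\setminus\hypsurf$. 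Next I reduce to a bounded setting: since $Y$ has finite-variation jumps, $J_t:=\sum_{0<s\le t}\Delta Y_s$ is a well-defined pure-jump finite-variation process, $Y^c=Y-J$ is a continuous semimartingale, and $[(Y^c)^k,(Y^c)^j]=[Y^k,Y^j]^c$; since $f$ is Lipschitz, $|f(Y_s)-f(Y_{s-})|\le L\|\Delta Y_s\|$, so the jump sum in \eqref{ito} converges absolutely, and it suffices to work up to $\tau_R:=\inf\{t\in[0,T]\colon\|Y_t\|\ge R\}$ (so $Y_{s-}$ is bounded by $R$ on $[0,\tau_R]$) and let $R\to\infty$ at the end, using $\tau_R\uparrow T$ since $Y$ is \cadlag. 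Finally, splitting $\int_0^t\partial_k g(Y_{s-})\,dY^k_s=\int_0^t\partial_k g(Y_{s-})\,d(Y^c)^k_s+\sum_{0<s\le t}\partial_k g(Y_{s-})\Delta Y^k_s$ shows that the standard It\^o formula for a $C^2$ function $g$ and a semimartingale $Y$ rearranges precisely into the right-hand side of \eqref{ito} with $f$ replaced by $g$; so the task is to prove this rearranged identity for our non-$C^2$ $f$.

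\emph{Mollification and the routine limits.} Let $\eta_n(x)=n^d\eta(nx)$ with $\eta$ a standard nonnegative mollifier and set $f_n:=f*\eta_n\in C^\infty(\R^d,\R)$. Since $f\in C^1$, $f_n\to f$ and $\nabla f_n=(\nabla f)*\eta_n\to\nabla f$ locally uniformly; since $\nabla f$ is $L$-Lipschitz, $\nabla^2 f_n=(\nabla^2 f)*\eta_n$ is uniformly bounded by $L$, and $\nabla^2 f_n(x)\to\nabla^2 f(x)$ for every $x\in\R^d\setminus\hypsurf$, where $\nabla^2 f$ is continuous by Lemma~\ref{lemG}. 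Apply the classical It\^o formula to $f_n(Y_t)$ — valid as $f_n\in C^2$ — obtaining the rearranged identity \eqref{ito} with $f$ replaced by $f_n$. Letting $n\to\infty$: $f_n(Y_t)\to f(Y_t)$ a.s.; each $\int_0^t\partial_k f_n(Y_{s-})\,d(Y^c)^k_s\to\int_0^t\partial_k f(Y_{s-})\,d(Y^c)^k_s$ in probability by the dominated convergence theorem for stochastic integrals, using uniform convergence of $\partial_k f_n$ on $\overline{B(0,R)}$; and $\sum_{0<s\le t}(f_n(Y_s)-f_n(Y_{s-}))\to\sum_{0<s\le t}(f(Y_s)-f(Y_{s-}))$ by dominated convergence with dominating function $2L\sum_{0<s\le t}\|\Delta Y_s\|<\infty$. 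Hence the second-order term $\tfrac12\sum_{k,j}\int_0^t\partial^2_{kj}f_n(Y_{s-})\,d[(Y^c)^k,(Y^c)^j]_s$ also converges in probability, and everything reduces to identifying its limit with $\tfrac12\sum_{k,j}\int_0^t\partial^2_{kj}f(Y_{s-})\,d[(Y^c)^k,(Y^c)^j]_s$, and in passing to checking that this last integral is well defined despite $\nabla^2 f$ being undefined on $\hypsurf$.

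\emph{The second-order term: the main obstacle.} Here the hypothesis $\P(Y_s\in\hypsurf)=0$ for all $s\in(0,T]$ enters. By Fubini, $\E\int_0^T\1_{\hypsurf}(Y_s)\,ds=0$, so $\{s\in[0,T]\colon Y_s\in\hypsurf\}$ has zero Lebesgue measure $\P$-a.s., and since $Y$ has only countably many jump times the same holds for $\{s\colon Y_{s-}\in\hypsurf\}$. Because $|\nabla^2 f_n|\le L$ and $\nabla^2 f_n\to\nabla^2 f$ on $\R^d\setminus\hypsurf$, the Kunita--Watanabe inequality (reducing the mixed brackets to $d\tr[Y^c]_s=\sum_k d[(Y^c)^k]_s$) together with dominated convergence yield the desired limit as soon as $\{s\colon Y_{s-}\in\hypsurf\}$ is $d\tr[Y^c]_s$-null. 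In the only case needed for the application to \eqref{eq:SDE} and $Z=G(X)$, namely when $Y$ is an It\^o process, $d[Y^c]_s$ is absolutely continuous with respect to $ds$ and this is immediate from the Lebesgue-null statement above. For a general semimartingale one refines the argument: since $f\in C^1$, all tangential second derivatives of $f$ extend continuously across $\hypsurf$, so only the pure-normal second derivative is discontinuous there, and its contribution while $Y^c\in\hypsurf$ is dominated by the occupation of level $0$ by the scalar $C^3$ signed-distance process $\delta(Y^c)$ (with $\|\nabla\delta\|\equiv1$ near $\hypsurf$), which vanishes by the one-dimensional occupation-times formula $\int_0^t\1_{\{0\}}(\delta(Y^c_s))\,d[\delta(Y^c)]_s=\int_\R\1_{\{0\}}(a)L^a_t\,da=0$, $L^a_t$ being the local time of $\delta(Y^c)$. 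This identifies the limit and proves \eqref{ito} for $Y^{\tau_R}$; letting $R\to\infty$ and passing from scalar components back to $G,G^{-1}$ finishes the proof. The main obstacle is precisely this last step — showing that the exceptional set $\hypsurf$ carries no mass for the quadratic-variation measure of $Y^c$ — for which the assumption that $Y$ spends (Lebesgue-)zero time on $\hypsurf$ is indispensable.
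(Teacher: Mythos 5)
Your proof is correct in substance but follows a genuinely different route from the paper's. The paper represents $\hypsurf$ locally as a graph $\{y_d=\psi^{\cU}(y_1,\dots,y_{d-1})\}$ and invokes Peskir's change-of-variable formula with local time on surfaces \cite[Theorem 3.1]{peskir2007} directly; the extra local-time terms appearing there are then removed by noting that $f'$ is continuous (first order) and by combining the local-time corollary of \cite[Chapter IV, Corollary 1]{protter2005} with $\P(Y_s\in\hypsurf)=0$ (second order). You instead mollify $f$, apply the classical It\^o formula to $f_n=f*\eta_n$, and pass to the limit; this is self-contained and avoids verifying the hypotheses of Peskir's theorem, but the price is that you must identify the limit of the second-order term yourself --- and that is exactly where the two proofs meet, since everything reduces to showing that $\{s\colon Y_{s-}\in\hypsurf\}$ is negligible for the quadratic-variation measure of $Y^c$, or at least for its normal-normal component, which is all that matters because the Hessian jump of $f$ across $\hypsurf$ is rank one in the normal direction. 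Your handling of this point is essentially right (immediate for It\^o processes, occupation-times formula in general), up to one slip: the relevant indicator is $\1_{\{Y_{s-}\in\hypsurf\}}=\1_{\{\delta(Y_{s-})=0\}}$, not $\1_{\{\delta(Y^c_{s-})=0\}}$, so you should apply the Meyer--It\^o occupation density formula to the (jumping, suitably localized) semimartingale $\delta(Y)$ rather than to $\delta(Y^c)$; its continuous bracket is $\nabla\delta(Y_{s-})^\top\, d[Y^c]_s\,\nabla\delta(Y_{s-})$, and the conclusion $\int_0^t\1_{\{0\}}(\delta(Y_{s-}))\,d[\delta(Y)]^c_s=\int_{\R}\1_{\{0\}}(a)\,L^a_t\,da=0$ is unchanged. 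With that correction, and the routine localization of $\delta$ to the tubular neighbourhood $\hypsurf^{\varepsilon_0}$ where it is smooth, your argument is a complete and arguably more transparent proof; what the paper's route buys is brevity, at the cost of leaning on the precise form of Peskir's theorem.
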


\begin{proof}
On $\R^d\backslash \hypsurf$, $f \in C^2$, hence \eqref{ito} is settled by \cite[eq.~(3.4), p.~81]{peskir2007}. Therefore, It\^o's formula holds for $y_0\in\R^d\backslash \hypsurf$ until the first time $Y$ hits $\hypsurf$. So we may restrict our analysis to the case $y_0\in \hypsurf$.


There exists an open set $\cU \subseteq \hypsurf^{\varepsilon_0}$ with $y_0\in\cU$ such that there exists $\psi^{\cU}\in C^2( \R^{d-1}, \R)$ with the property that all $y=(y_1,\dots,y_{d-1},y_d)\in\hypsurf \cap \cU$ can be represented as
$y=(y_1,\dots,y_{d-1},\psi^\cU(y_1,\dots,y_{d-1}))$,
 or such that we can rotate the state space in a smooth way so that the representation holds. Hence without loss of generality we assume that such a rotation is not necessary. This means that the set of points where $f'$ is not differentiable is locally given by some $\{y^d=\psi^\cU(y_1,\dots,y_{d-1})\} $.

Since $\psi^{\cU}\in C^2( \R^{d-1}, \R)$, it holds that $\psi^\cU(Y^1,\dots,Y^{d-1})$ is a semimartingale.
Due to the properties of $f$ from Lemma \ref{lemG}, on these $\cU$ the It\^o-type formula \cite[Theorem 3.1]{peskir2007} holds. Since $f'$ is a continuous function, the first order terms in \cite[equation (3.6)]{peskir2007} reduce to the form of the first order terms in \eqref{ito}.
The same holds for all mixed derivatives except $ \frac{\partial^2 f}{\partial y_d^2} $.

Furthermore, since $f''$ is bounded, we may apply the local time formula from \cite[Chapter IV, Corollary 1, p.~219]{protter2005}, which says that formally the integral of $f''\cdot \1_{\{(Y_s)^d=\psi^\cU(Y_s^1,\dots,Y_s^{d-1})\}}$ with respect to the quadratic variation of $Y^d$ from \cite[equation (3.6)]{peskir2007} can be expressed as space integral of the same function times a local time.
Since for all $s\in(0,t]$, $\P(Y_s\in\hypsurf)=0$, we have
\begin{align*}
\P((Y_s)^d=\psi^\cU(Y_s^1,\dots,Y_s^{d-1}))=0.
\end{align*}
So together with the local time formula from \cite[Chapter IV, Corollary 1, p.~219]{protter2005} we have that the second order terms in \cite[equation (3.6)]{peskir2007} reduce to the form of the second order terms in \eqref{ito}.
\end{proof}

\section{Existence and uniqueness result}

In this section we are going to prove our main result.

\begin{theorem}
\label{th:exun}
Let Assumptions \ref{ass:jump} and \ref{ass:ex-un} hold.
Then the SDE \eqref{eq:SDE} has a unique global strong solution.
\end{theorem}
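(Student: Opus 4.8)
The plan is to follow the classical transformation strategy of \cite{sz2017a}, adapted to the jump setting. The transform $G$ from \eqref{eq:G} is chosen precisely so that, formally, $Z := G(X)$ should satisfy an SDE whose drift is Lipschitz (the discontinuity of $\mu$ across $\hypsurf$ being exactly cancelled by the jump in $G''$, or rather by the local behaviour of $G$ encoded through $\alpha$), while the diffusion and jump coefficients remain Lipschitz because $G$ and $G^{-1}$ are Lipschitz with bounded derivatives (Lemma \ref{lemG}). Since $G$ is a global $C^1$-diffeomorphism with Lipschitz inverse, solving the SDE for $X$ is equivalent to solving it for $Z$, and for $Z$ we may invoke the classical existence and uniqueness theorem \cite[Theorem 1.9.3]{PBL}.

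Concretely, I would proceed in the following steps. \textbf{Step 1: derive the transformed SDE.} Apply the It\^o-type formula of Proposition \ref{propito} to $f = G$ and $Y = X$ (this is legitimate provided $\P(X_s \in \hypsurf) = 0$ for all $s \in (0,T]$, which must be checked separately — see below). Computing the continuous-part integrals using $\mu,\sigma$ and rewriting the jump-sum term $\sum_{0<s\le t}(G(X_s)-G(X_{s-}))$ in terms of the Poisson random measure $\nu$ and the jump coefficient $\rho$, one obtains that $Z = G(X)$ solves
\begin{align*}
dZ_t &= \tilde\mu(Z_{t-})\,dt + \tilde\sigma(Z_{t-})\,dW_t + \int_\R \tilde\rho(Z_{t-},y)\,\nu(dy,dt),\quad Z_0 = G(x_0),
\end{align*}
where, writing $X = G^{-1}(Z)$,
\begin{align*}
\tilde\sigma(z) &= G'(G^{-1}(z))\,\sigma(G^{-1}(z)),\\
\tilde\rho(z,y) &= G\big(G^{-1}(z) + \rho(G^{-1}(z),y)\big) - z,\\
\tilde\mu(z) &= G'(G^{-1}(z))\,\mu(G^{-1}(z)) + \tfrac12 \sum_{k,j} \partial_k\partial_j G(G^{-1}(z))\,\big(\sigma\sigma^\top(G^{-1}(z))\big)_{kj}.
\end{align*}
\textbf{Step 2: verify Lipschitz continuity of the transformed coefficients.} For $\tilde\sigma$ this follows from $\sigma$ Lipschitz, $G'$ Lipschitz and bounded, and $G^{-1}$ Lipschitz; for $\tilde\rho$ from $G$ Lipschitz and the assumptions (ass.~$\rho$) together with $G^{-1}$ Lipschitz (one checks the mean-square Lipschitz and linear-growth bounds in the $y$-variable carry over). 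The crucial point is $\tilde\mu$: away from $\hypsurf$ the function $G'\mu + \tfrac12\sum \partial_k\partial_j G\,(\sigma\sigma^\top)_{kj}$ is piecewise Lipschitz, and the definition \eqref{eq:alphad} of $\alpha$ — hence the specific form of $G$ — guarantees that the one-sided limits of this expression from the two sides of $\hypsurf$ coincide, so it extends to a (globally) Lipschitz function on $\R^d$. This is the genuine content of the transformation method and I would cite/adapt the corresponding computation from \cite{sz2017a} (or \cite{NSS19}), using (ass.~$\mu$), (ass.~$\sigma$), (loc.~bound), (non-parallelity), (techn.~ass.) and the boundedness of $G', G''$.

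\textbf{Step 3: invoke classical theory and transform back.} By \cite[Theorem 1.9.3]{PBL} the transformed SDE has a unique global strong solution $Z$. Set $X := G^{-1}(Z)$. Since $G^{-1} \in C^1$ with locally bounded second derivatives away from $G(\hypsurf)$ and Lipschitz, applying Proposition \ref{propito} with $f = G^{-1}$ (again needing $\P(Z_s \in G(\hypsurf)) = 0$) shows $X$ solves \eqref{eq:SDE}; uniqueness for $X$ follows from uniqueness for $Z$ and bijectivity of $G$. \textbf{The main obstacle} is the occupation-time condition $\P(X_s \in \hypsurf) = 0$ for $s \in (0,T]$, which is a hypothesis of Proposition \ref{propito} but is not known a priori for a solution of \eqref{eq:SDE} (indeed $\mu$ is only defined up to the null set $\hypsurf$). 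The standard remedy, which I would follow, is a fixed-point/approximation scheme: one shows that any solution of \eqref{eq:SDE}, between consecutive jump times $\tau_k, \tau_{k+1}$, behaves like a solution of a jump-free SDE with coefficients $\mu,\sigma$, and for such continuous Itô processes the non-degeneracy condition (non-parallelity) together with the co-area/local-time argument (as in the proof of Proposition \ref{propito} and as in \cite{sz2017a}) forces $\P(X_s^c \in \hypsurf) = 0$ for $s > \tau_k$; since there are only finitely many jumps on $[0,T]$ and each jump lands in $\hypsurf$ with probability zero (because the post-jump position has an absolutely continuous conditional law, or by a further regularization), one concludes $\P(X_s \in \hypsurf) = 0$ for all $s \in (0,T]$. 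Making this rigorous — in particular handling the interplay between the discontinuous drift, the reflection of the argument through $G^{-1}$, and the Poisson jumps that may a priori push $X$ onto $\hypsurf$ — is where the real work of the proof lies; everything else is bookkeeping with the estimates provided by Assumption \ref{ass:ex-un} and Lemma \ref{lemG}.
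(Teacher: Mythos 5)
Your proposal follows the same transformation strategy as the paper: the same $G$, the same transformed coefficients $\tilde\mu,\tilde\sigma,\tilde\rho$, the same appeal to \cite{sz2017a} for the Lipschitz continuity of $\tilde\mu,\tilde\sigma$ and to \cite[Theorem 1.9.3]{PBL} for the $Z$-equation, and the same use of Proposition \ref{propito} to transform back. The one genuine difference is the logical ordering around what you call the main obstacle. The paper applies It\^o to $G(X)$ only \emph{heuristically}, to motivate the definition of the SDE \eqref{eq:Z}; the rigorous construction runs exclusively in the other direction: $Z$ is \emph{defined} as the unique solution of \eqref{eq:Z}, and one sets $X:=G^{-1}(Z)$. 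Consequently the occupation-time condition is only ever needed for $Z$, where it follows directly from the non-parallelity condition carrying over to $\tilde\sigma$ (cf.\ \cite[equation (4.8)]{peskir2007}); no approximation or fixed-point scheme for $X$ is required, and your proposed machinery for establishing $\P(X_s\in\hypsurf)=0$ is superfluous for the existence part. Where your concern is legitimate is uniqueness: to show that any putative solution $X'$ of \eqref{eq:SDE} satisfies $G(X')=Z$, one does need to apply Proposition \ref{propito} to $G$ and $X'$, hence needs $\P(X'_s\in\hypsurf)=0$ for an arbitrary solution; your argument (between the finitely many jump times $X'$ is a non-degenerate It\^o process, and each jump lands on $\hypsurf$ with probability zero) is the right one, and on this point the paper's written proof is in fact terser than your plan --- it establishes the existence direction explicitly and leaves the uniqueness direction to the reader. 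So your route is correct and, if anything, more careful than the paper on uniqueness, but needlessly heavy on existence.
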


\begin{proof}
The idea of the proof is to construct a process $Z$ for which we can verify that $G^{-1}(Z)$ solves \eqref{eq:SDE}.
For this we will first heuristically apply the It\^o-type formula from Proposition \ref{propito} to $G$ componentwise. Observe that
\begin{align*}
	\sum_{0<s\leq t} G'(X_{s-})\Delta X(s)&=\sum_{0<s\leq t}G'(X_{s-})\rho(X_{s-},\Delta L_s)\mathbf{1}_D(s)\notag\\
	&=\int\limits_0^t\int\limits_{\mathbb{R}}G'(X_{s-})\rho(X_{s-},y)\nu (dy,ds)
\end{align*}
and
\begin{align*}
	\int\limits_0^t G'(X_{s-})dX_s&=\int\limits_0^t G'(X_{s-})\mu(X_{s-})ds
	+\int_0^t  G'(X_{s-}) \sigma(X_{s-}) dW_s\notag\\
	&\quad+\int\limits_0^t\int\limits_{\mathbb{R}}G'(X_{s-})\rho(X_{s-},y)\nu (dy,ds).
\end{align*}
Combining this with Proposition \ref{propito} yields
\begin{align*}
	G(X_t)&=G(x_0) + \int_0^t \left(G'(X_{s-})\mu(X_{s-}) +\frac{1}{2}\tr \left[\sigma(X_{s-})^\top G''(X_{s-})\sigma(X_{s-})\right]\right) ds
	\\&\quad
+ \int_0^t  G'(X_{s-})\sigma(X_{s-}) dW_s+\sum_{0<s\le t} \left(G(X_s)-G(X_{s-})\right).
\end{align*}
Observe that
\begin{align*}
	\sum_{0<s\le t} \left(G(X_s)-G(X_{s-})\right) = \int_0^t\int_\R \left(G(X_{s-}+\rho(X_{s-},y))-G(X_{s-})\right) \nu(dy,ds).
\end{align*}
So we have
\begin{align*}
	G(X_t)&=G(x_0) + \int_0^t \left(G'(X_{s-})\mu(X_{s-}) +\frac{1}{2}\tr \left[\sigma(X_{s-})^\top G''(X_{s-})\sigma(X_{s-})\right]\right) ds
	\\&\quad
	+ \int_0^t  G'(X_{s-})\sigma(X_{s-}) dW_s 
	+ \int_0^t\int_\R \left(G(X_{s-}+\rho(X_{s-},y))-G(X_{s-})\right) \nu(dy,ds).
\end{align*}
Now we define the process $Z=G(X)$, which for all $t\in[0,T]$ is given by
\begin{equation}
\label{eq:Z}
	Z_t=G(x_0) + \int_0^t \tilde\mu (Z_{s-})ds+\int_0^t\tilde\sigma(Z_{s-})dW_s
	+
	\int_0^t\int_\R \tilde \rho(Z_{s-},y) \nu(dy,ds),
\end{equation}
where for all $y\in\R$, $z\in\mathbb{R}^d$ 
\begin{align}
\tilde\mu(z)&=G'(G^{-1}(z))\mu(G^{-1}(z)) +\frac{1}{2}\tr \left[\sigma(G^{-1}(z))^\top G''(G^{-1}(z))\sigma(G^{-1}(z))\right],\\
\tilde\sigma(z)&= G'(G^{-1}(z))\sigma(G^{-1}(z)), \\
\label{t_rho_def}
\tilde \rho(z,y)&=G\Bigl(G^{-1}(z)+\rho(G^{-1}(z),y)\Bigr)-G(G^{-1}(z))=G\Bigl(G^{-1}(z)+\rho(G^{-1}(z),y)\Bigr)-z.
\end{align}

In \cite{sz2017a} it is shown that $\tilde\mu$ and $\tilde\sigma$ are Lipschitz. Due to the global Lipschitz continuity of $G$ and $G^{-1}$, the linear growth of $G$, $G^{-1}$, and $\rho$, and the finiteness of the second moment of $\xi_1$ we get that there exists $c_{\tilde{\rho}}\in(0,\infty)$ such that for all $z,z_1,z_2\in\mathbb{R}$,
\begin{align*}
	&\int\limits_{\mathbb{R}}\|\tilde\rho(z_1,y)-\tilde\rho(z_2,y)\|^2\psi(dy)\leq c_{\tilde{\rho}}\|z_1-z_2\|^2,\\
	&\int\limits_{\mathbb{R}}\|\tilde\rho(z,y)\|^2\psi(dy)\leq c_{\tilde{\rho}}(1+\|z\|^2).
\end{align*} 
Hence, the SDE for $Z$, that is \eqref{eq:Z} with initial condition $Z(0)=G(\xi)$, has a unique global strong solution by, for example, \cite[Theorem 1.9.3]{PBL}.

It remains to verify that the process $X$ defined by $X=G^{-1}(Z)$ solves \eqref{eq:SDE}.
For this, first observe that $(G^{-1})'(z)=1/G'(G^{-1}(z))$ is absolutely continuous since it is Lipschitz.
Moreover, $G^{-1}(Z_{t-})=\lim\limits_{s\to t-}G^{-1}(Z_s)=X_{t-}$. Hence, \eqref{t_rho_def} gives
\begin{equation}
	Z_{t-}+\tilde\rho(Z_{t-},y)=G(G^{-1}(Z_{t-})+\rho(X_{t-},y)).
\end{equation}
This implies that
\begin{equation}
\label{gg_1}
	G^{-1}(Z_{t-}+\tilde \rho (Z_{t-},y))-G^{-1}(Z_{t-})=\rho(X_{t-},y).
\end{equation}
Furthermore, the non-parallelity condition of Assumption \ref{ass:ex-un} carries over to $\tilde \sigma$, ensuring that for all $s\in(0,t]$,
$\P(Z_s\in\hypsurf)=0$
(for the argument cf.~also \cite[equation (4.8)]{peskir2007}).
Therefore, Proposition \ref{propito} applied to $G^{-1}$ yields
\begin{align*}
	G^{-1}(Z_t)&=G^{-1}(Z_0) + \int_0^t \left((G^{-1})'(Z_{s-}) \tilde\mu(Z_{s-})+\frac{1}{2}\tr \left[\tilde\sigma(Z_{s-})^\top G''(Z_{s-})\tilde\sigma(Z_{s-})\right]\right) ds
	\\&\quad
+ \int_0^t (G^{-1})'(Z_{s-}) \tilde\sigma(Z_{s-})  dW_s+\sum_{0<s\le t} \left(G^{-1}(Z_s)-G^{-1}(Z_{s-})\right),
\end{align*}
where by \eqref{gg_1},
\begin{align*}
	&\sum_{0<s\le t} \left(G^{-1}(Z_s)-G^{-1}(Z_{s-})\right) \\
	&=\sum_{0<s\leq t}\left(G^{-1}(Z_{s-}+\tilde\rho(Z_{s-},\Delta L_s)\mathbf{1}_D(s))-G^{-1}(Z_{s-})\right)\\
	&=\sum_{0<s\leq t}\left(G^{-1}(Z_{s-}+\tilde\rho(Z_{s-},\Delta L_s))-G^{-1}(Z_{s-})\right)\mathbf{1}_D(s)\\
	&= \int_0^t\int_\R \left(G^{-1}(Z_{s-}+\tilde\rho(Z_{s-},y))-G^{-1}(Z_{s-})\right) \nu(dy,ds)
	\\&= \int_0^t\int_\R \rho(X_{s-},y) \nu(dy,ds).
	\end{align*}
So for $X=G^{-1}(Z)$ we have $X_0=G^{-1}(Z_0)=G^{-1}(G(x_0))=x_0$ and for all $t\in(0,T]$,
\begin{align*}
dX_t&=\Bigl((G^{-1})'(Z_{t-})\tilde\mu (Z_{t-})+\frac{1}{2}\tr \left[\tilde\sigma(Z_{t-})^\top G''(Z_{t-})\tilde\sigma(Z_{t-})\right]\Bigr)dt+(G^{-1})'(Z_{t-})\tilde\sigma(Z_{t-})dW_t
	\\&\quad
	+\int_{\R}\rho(G^{-1}(Z_{t-}),y)\nu (dy,dt)
	\\&=\mu(X_{t-})dt+\sigma(X_{t-})dW_t+\int_{\R}\rho (X_{t-},y)\nu (dy,dt).
\end{align*}
This closes the proof.
\end{proof}

\section{Examples}

We mention the two most important classes of jump processes that are covered in our setup.

\begin{example}[Poisson process] Let $\xi_k=1$ for all $k\in\N$ and for all $y\in\R$ let $\rho(x,y)=\rho(x)$, i.e.~$\rho$ depends only on the first variable. Therefore,
Theorem \ref{th:exun} holds for the case that the jump process governing \eqref{eq:SDE} is a Poisson process, that is for all $t\in[0,T]$,
\begin{equation}
	\int_0^t\int_{\R} \rho(X_{s-},y) \nu(dy,ds)=\sum_{k=1}^{N_t}\rho(X_{\tau_{k}-})=\int\limits_0^t \rho(X_{s-})dN_s.
\end{equation}

For $d=1$ Theorem \ref{th:exun} is equivalent to \cite[Theorem 3.1]{PS20}.
Hence, in the case of Poisson jumps, Theorem \ref{th:exun} extends \cite[Theorem 3.1]{PS20} to a multi-dimensional setting. 
\end{example}
\begin{example}[Compound Poisson process] Now let $\rho(x,y)=y\cdot\rho(x)$.
Therefore, Theorem \ref{th:exun} also holds for the case that the jump process governing \eqref{eq:SDE} is a compound Poisson process, that is for all $t\in[0,T]$,
\begin{equation}
	\int_0^t\int_{\R} \rho(X_{s-},y) \nu(dy,ds)=\sum_{k=1}^{N_t}\xi_k\cdot\rho(X_{\tau_{k}-})=\sum_{0<s\leq t}\rho(X_{s-})\Delta L_s=\int\limits_0^t \rho(X_{s-})dL_s.
\end{equation}
\end{example}


\section*{Acknowledgements}

M.~Sz\"olgyenyi thanks Gunther Leobacher for useful discussions that improved the paper and Goran Peskir for nice discussions about change of variable formulas at the Stochastic Models and Control conference in Trier, Germany in 2017 and during her research stay in Manchester, UK in the same year.\\

M.~Sz\"olgyenyi is supported by the Austrian Science Fund (FWF) project DOC 78.

P.~Przyby{\l}owicz is supported  by  the  National  Science  Centre, Poland, under project\\ 2017/25/B/ST1/00945.



\vspace{2em}
\centerline{\underline{\hspace*{16cm}}}

 \noindent Pawe{\l} Przyby{\l}owicz \\
Faculty of Applied Mathematics, AGH University of Science and Technology, Al.~Mickiewicza 30, 30-059 Krakow, Poland\\
pprzybyl@agh.edu.pl\\

\noindent Michaela Sz\"olgyenyi \Letter \\
Department of Statistics, University of Klagenfurt, Universit\"atsstra\ss{}e 65-67, 9020 Klagenfurt, Austria\\
michaela.szoelgyenyi@aau.at\\

\noindent Fanhui Xu \\
Department of Mathematical Sciences, Carnegie Mellon University, 5000 Forbes Ave, Pittsburgh, PA 15213, United States\\
fanhuix@andrew.cmu.edu



\begin{thebibliography}{23}
\providecommand{\natexlab}[1]{#1}
\providecommand{\url}[1]{\texttt{#1}}
\expandafter\ifx\csname urlstyle\endcsname\relax
  \providecommand{\doi}[1]{doi: #1}\else
  \providecommand{\doi}{doi: \begingroup \urlstyle{rm}\Url}\fi

\bibitem[Delong(2013)]{lukaszdelong}
{\L}.~Delong.
\newblock \emph{Backward stochastic differential equations with jumps and their
  actuarial and financial applications:~{BSDE}s with jumps}.
\newblock European Actuarial Academy (EAA) Series. Springer London, 2013.

\bibitem[Dudek and Holly(1994)]{dudekholly}
E.~Dudek and K.~Holly.
\newblock Nonlinear orthogonal projection.
\newblock \emph{Annales Polonici Mathematici}, 59\penalty0 (1):\penalty0 1--31,
  1994.

\bibitem[Krylov and R\"{o}ckner(2005)]{KR}
N.V. Krylov and M.~R\"{o}ckner.
\newblock Strong solutions of stochastic equations with singular time dependent
  drift.
\newblock \emph{Probability theory and related fields}, 131\penalty0
  (2):\penalty0 154--196, 2005.

\bibitem[Leobacher and Sz\"olgyenyi(2016)]{sz15}
G.~Leobacher and M.~Sz\"olgyenyi.
\newblock {A Numerical Method for SDEs with Discontinuous Drift}.
\newblock \emph{BIT Numerical Mathematics}, 56\penalty0 (1):\penalty0 151--162,
  2016.

\bibitem[Leobacher and Sz\"olgyenyi(2017)]{sz2017a}
G.~Leobacher and M.~Sz\"olgyenyi.
\newblock {A Strong Order 1/2 Method for Multidimensional SDEs with
  Discontinuous Drift}.
\newblock \emph{The Annals of Applied Probability}, 27\penalty0 (4):\penalty0
  2383--2418, 2017.

\bibitem[Leobacher and Sz\"olgyenyi(2019)]{LS19}
G.~Leobacher and M.~Sz\"olgyenyi.
\newblock Correction note: A strong order 1/2 method for multidimensional sdes
  with discontinuous drift.
\newblock \emph{Ann. Appl. Probab.}, 29\penalty0 (5):\penalty0 3266--3269,
  2019.
\newblock ISSN 1050-5164.
\newblock \doi{10.1214/19-AAP1480}.

\bibitem[Leobacher et~al.(2015)Leobacher, Sz\"olgyenyi, and Thonhauser]{sz14}
G.~Leobacher, M.~Sz\"olgyenyi, and S.~Thonhauser.
\newblock {On the Existence of Solutions of a Class of {SDE}s with
  Discontinuous Drift and Singular Diffusion}.
\newblock \emph{Electronic Communications in Probability}, 20\penalty0
  (6):\penalty0 1--14, 2015.

\bibitem[Neuenkirch et~al.(2019)Neuenkirch, Sz\"olgyenyi, and Szpruch]{NSS19}
A.~Neuenkirch, M.~Sz\"olgyenyi, and L.~Szpruch.
\newblock An adaptive {E}uler-{M}aruyama scheme for stochastic differential
  equations with discontinuous drift and its convergence analysis.
\newblock \emph{SIAM Journal on Numerical Analysis}, 57\penalty0 (1):\penalty0
  378--403, 2019.

\bibitem[Peskir(2007)]{peskir2007}
G.~Peskir.
\newblock {A Change-of-Variable Formula with Local Time on Surfaces}.
\newblock \emph{S\'eminaire de probabilit\'es XL}, 1899:\penalty0 69--96, 2007.

\bibitem[Platen and Bruti-Liberati(2010)]{PBL}
E.~Platen and N.~Bruti-Liberati.
\newblock \emph{Numerical solution of stochastic differential equations with
  jumps in finance}.
\newblock Springer, Springer Verlag, Berlin, Heidelberg, 2010.

\bibitem[Priola(2012)]{P2012}
E.~Priola.
\newblock Pathwise uniqueness for singular sdes driven by stable processes.
\newblock \emph{Osaka J. Math.}, 49\penalty0 (2):\penalty0 421--447, 06 2012.

\bibitem[Protter(2005)]{protter2005}
P.~Protter.
\newblock \emph{Stochastic Integration and Differential Equations}.
\newblock Stochastic Modelling and Applied Probability. Springer,
  Berlin-Heidelberg, 2005.

\bibitem[Przyby{\l}owicz and Sz\"olgyenyi(2020)]{PS20}
P.~Przyby{\l}owicz and M.~Sz\"olgyenyi.
\newblock Existence, uniqueness, and approximation of solutions of
  jump-diffusion sdes with discontinuous drift.
\newblock 2020.
\newblock Submitted, arXiv:1912.04215.

\bibitem[Shardin and Sz\"olgyenyi(2016)]{sz2016a}
A.~A. Shardin and M.~Sz\"olgyenyi.
\newblock {Optimal Control of an Energy Storage Facility Under a Changing
  Economic Environment and Partial Information}.
\newblock \emph{International Journal of Theoretical and Applied Finance},
  19\penalty0 (4):\penalty0 1--27, 2016.

\bibitem[Shardin and Wunderlich(2017)]{shardin2017}
A.~A. Shardin and R.~Wunderlich.
\newblock {Partially Observable Stochastic Optimal Control Problems for an
  Energy Storage}.
\newblock \emph{Stochastics}, 89\penalty0 (1):\penalty0 280--310, 2017.

\bibitem[Situ(2005)]{situ2005}
R.~Situ.
\newblock \emph{Theory of Stochastic Differential Equations with Jumps and
  Applications}.
\newblock Mathematical and Analytical Techniques with Applications to
  Engineering. Springer, 2005.

\bibitem[Veretennikov(1981)]{Veretennikov1981}
A.~YU. Veretennikov.
\newblock {On Strong Solutions and Explicit Formulas for Solutions of
  Stochastic Integral Equations}.
\newblock \emph{Mathematics of the USSR Sbornik}, 39\penalty0 (3):\penalty0
  387--403, 1981.

\bibitem[Veretennikov(1982)]{Veretennikov1982}
A.~YU. Veretennikov.
\newblock {On the Criteria for Existence of a Strong Solution of a Stochastic
  Equation}.
\newblock \emph{Theory of Probability and its Applications}, 27\penalty0 (3),
  1982.

\bibitem[Veretennikov(1984)]{Veretennikov1984}
A.~YU. Veretennikov.
\newblock {On Stochastic Equations with Degenerate Diffusion with Respect to
  Some of the Variables}.
\newblock \emph{Mathematics of the USSR Izvestiya}, 22\penalty0 (1):\penalty0
  173--180, 1984.

\bibitem[Xie and Zhang(2016)]{XZ}
L.~Xie and X.~Zhang.
\newblock Sobolev differentiable flows of sdes with local sobolev and
  super-linear growth coefficients.
\newblock \emph{The Annals of Probability}, 44\penalty0 (6):\penalty0
  3661--3687, 2016.

\bibitem[Xie and Zhang(2020)]{XZ1}
L.~Xie and X.~Zhang.
\newblock Ergodicity of stochastic differential equations with jumps and
  singular coefficients.
\newblock \emph{Ann. Inst. H. Poincaré Probab. Statist.}, 56\penalty0
  (1):\penalty0 175--229, 02 2020.
\newblock \doi{10.1214/19-AIHP959}.

\bibitem[Zhang(2013)]{Z2013}
X.~Zhang.
\newblock Stochastic differential equations with sobolev drifts and driven by
  $\alpha$-stable processes.
\newblock \emph{Ann. Inst. H. Poincaré Probab. Statist.}, 49\penalty0
  (4):\penalty0 1057--1079, 11 2013.
\newblock \doi{10.1214/12-AIHP476}.

\bibitem[Zvonkin(1974)]{Zvonkin1974}
A.~K. Zvonkin.
\newblock {A Transformation of the Phase Space of a Diffusion Process that
  Removes the Drift}.
\newblock \emph{Mathematics of the USSR Sbornik}, 22\penalty0 (129):\penalty0
  129--149, 1974.

\end{thebibliography}
\end{document}